\title{Explicit Consistency Error Estimate for Finite Element Solutions of the Poisson Equation on Convex Domains}
\author{Su Ruibo}
\date{October 2025}
\begin{document}

\maketitle

\begin{abstract}
We derive explicit a priori consistency error estimates for a standard finite element discretization of the Poisson equation on convex domains, where the domain is approximated by an internal convex polyhedron. The obtained explicit estimates depend only on global geometric parameters and are applicable to general convex domains and arbitrary families of simplicial meshes.
\end{abstract}

\newcommand*\Laplace{\Delta}

\newcommand{\Domain}{\Omega}
\newcommand{\Boundary}{\partial \Domain}
\newcommand{\EucSp}{\mathbb{R}^n}
\newcommand{\SubDomain}{\Omega_{\delta}}
\newcommand{\SubBoundary}{\partial\SubDomain}
\newcommand{\Triangulation}{\mathcal{T}_{h}}
\newcommand{\facets}{\mathcal{F}_{\delta}}
\newcommand{\maxForce}{\|f\|_{L^\infty(\Domain)}}

\newtheorem{theorem}{Theorem}[section]
\newtheorem{lemma}[theorem]{Lemma}
\newtheorem{proposition}[theorem]{Proposition}
\newtheorem{definition}[theorem]{Definition}
\newtheorem{corollary}[theorem]{Corollary}
\newtheorem{Remark}[theorem]{Remark}
\newtheorem{Definition}[theorem]{Definition}

\section{Introduction}
The Dirichlet problem of the Poisson equation is given as
\begin{equation}\nonumber
    \left\{\begin{aligned}
    -\Laplace \phi &= f \text{ in }\Domain,\\
    \phi &= g\text{ on }\Boundary,
    \end{aligned}\right.
\end{equation}
where $\Domain$ is an open, bounded set in $\EucSp$.

A standard approach to the Dirichlet problem is to decompose it into two subproblems: a Laplace equation with the prescribed boundary value $g$, and a Poisson equation with homogeneous Dirichlet boundary conditions,
\begin{equation}\label{ExactProblem}
    \left\{\begin{aligned}
    -\Laplace u &= f \text{ in }\Domain,\\
    u &= 0\text{ on }\Boundary,
    \end{aligned}\right.
\end{equation}
The harmonic part, satisfying the Laplace equation, can be approximated using potential methods (see, for example,~\cite{chargeMethod}) or other techniques, for which a supremum error estimate follows from the maximum principle. We focus on the second part, the Poisson equation with homogeneous boundary conditions, and apply the Finite Element Method to it. 

For 2-dimensional polygonal domains, Liu\cite{doi:10.1137/120878446, Liu2024} proposed computable priori error estimates that require either evaluation of local interpolation error estimates on each element for $H^2$ solutions or solving two finite element problems using the Lagrange FEM and the Raviart–Thomas FEM for  solutions without $H^2$-regularity. His result was based on his earlier studies on interpolation error on triangular elements\cite{KIKUCHI20073750}, and has inspired further studies on local error estimate\cite{NAKANO2023115061} and non-homogeneous Neumann problem\cite{li2018explicitfiniteelementerror}. Error analysis for general boundary conditions on smooth domains has also been investigated in \cite{Barrett1986,chiba2022nitschesmethodrobinboundary,kashiwabara2023finiteelementanalysisgeneralized}, although those results involve unspecified constants.

In this paper, we consider a simple yet realistic computational setting where both the domain and the source term are perturbed, and derive an explicit consistency error estimate for \eqref{ExactProblem} on convex domains in dimensions $n=2,3$. The obtained bounds are fully explicit and depend only on global geometric parameters and seminorms of known functions, and can be further refined by case-specific calculations. In the two-dimensional case, we further propose explicit consistency error estimates that do not involve the minimal angle of the mesh. These results do not require mesh-specific computation and are therefore applicable to general families of meshes. 

The derivation proceeds as follows. First, the domain is approximated by a polyhedral domain, and the resulting boundary perturbation is analyzed using a barrier function argument. Next, based on established local interpolation error estimates \cite{KIKUCHI20073750, kobayashi2025, Arcangeli,TetrahedronExplicit}, we obtain an explicit bound for the error between the weak solution in the perturbed domain and its FEM approximation.

\section{Preliminaries}

\subsection{Domain and Function Spaces}

Let the domain $\Domain$ be convex and bounded , and we denote $D=\text{diam}(\Domain)$, and $|U|$ as the Lebesgue measure of any measurable set $U$. The source term $f$ is assumed to belong to $L^2\cap L^\infty$, and is sufficiently smooth for the subsequent discussion. 

We denote by $|\cdot|_k$ the standard $H^k(\Domain)$ seminorms. For clarity, the $H^2$ seminorm is given by 
\begin{equation}
    |u|_2 =\big(\int_{\Domain}\sum_{i,j=1}^{n}|\partial_i \partial_j u|^2 dx\big)^{1/2}.
\end{equation}

For any open domain $U$, each function $a\in H_{0}^1(U)$ is extended by 0 out of $U$.

$u$ is a weak solution of \eqref{ExactProblem} if and only if
\begin{equation}
    \int_{\Domain}\nabla u\cdot \nabla v dx=\int_{\Domain} f v dx, \forall v\in H_0^1(\Domain)
\end{equation}

\subsection{Approximation of Domain}
We approximate the domain $\Domain$ with a convex open polyhedron $\SubDomain\subset \overline{\Domain}$ whose vertices lie on $\Boundary$. The gap $\delta$ between $\SubBoundary$ and $\Boundary$ is defined as follows.

\begin{figure}[H]
    \centering
    \includegraphics[width=0.4\linewidth]{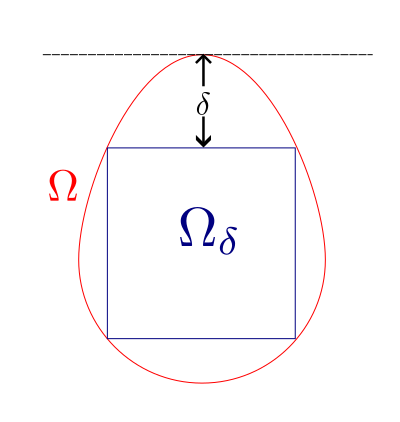}
    \caption{gap width}
    \label{fig:defGap}
\end{figure}

For each facet $F$ of $\SubDomain$, we denote the outward unit normal vector as $\vec{n}_{F}$, and the barycenter as $\vec{g}_F$. We denote  the set of all boundary facets of $\SubDomain$ as $\facets$.

The boundary gap $\delta$ is then defined by
\begin{Definition}\label{def:gap}
    \begin{equation}\label{gap}
    \delta=\max_{F\in \facets} \big(\max_{x\in\Boundary} \vec{n}_{F}\cdot(x-\vec{g}_F)\big).
\end{equation}
\end{Definition}

\begin{Remark}
    $\delta \sim O(l) $ if $\Boundary$ is Lipschitz, and $\delta \sim O(l^2) $ if $\Boundary$ is $C^2$, where $l$ denotes the largest diameter of the facets of $\SubDomain$.
\end{Remark}

\subsection{Triangulation and Finite Element Space}
We consider Lagrange elements on simplexes. The general theory for finite element spaces can be found in \cite{Brenner-Scott}.

Let $\Triangulation$ be a conforming triangulation of $\SubDomain$ consisting of simplexes. For each $T\in \Triangulation$, we denote $h_T$ as the length of the largest edge, $\rho_T$ as the supremum of the diameters of balls contained in $T$, and $R_T$ as the circumsradius of $T$.

The mesh size $h$ is defined as
\begin{equation}\label{meshsize}
    h=\max_{T\in \Triangulation}h_T.
\end{equation}

The maximal circumradius of the mesh is defined by
\begin{equation}\label{def:Rh}
    R_h = \max_{T \in \Triangulation} R_T,
\end{equation}

$\Triangulation$ is said to be regular if there is a $\sigma>0$ such that $h_T/\rho_T\leq \sigma$ for each $T\in\Triangulation$.

For 2-dimensional cases, $\Triangulation$ is said to be non-blunt if each $T\in\Triangulation$ is non-blunt, the minimal angle $\theta_0$ of $\Triangulation$ is defined as the minimum of all interior angles of the elements $T\in\Triangulation$.

We denote the interior nodes of $\Triangulation$ as $x_1, x_2,\dots,x_M$, and the boundary nodes as $x_{M+1},\dots,x_N$, with corresponding nodal basis functions $\varphi_1,\dots,\varphi_N$.

Then the finite element spaces are defined by
\begin{equation}
    \begin{aligned}
        H^1(\SubDomain)\supset V_h=\text{Span}(\{\varphi_1,\dots,\varphi_N\}),\\
        H_{0}^1(\SubDomain)\supset V_{h,0}=\text{Span}(\{\varphi_1,\dots,\varphi_M\}).
    \end{aligned}
\end{equation}

For each $a\in C(\overline{\SubDomain})$, its interpolation onto $V_{h,0}$ is defined as
\begin{equation}
    \Pi_{h}a=\sum_{i=1}^{N} a(x_i)\varphi_i.
\end{equation}

Then if $a\in H^2(\SubDomain)$ the interpolation error can be estimated as
\begin{equation}
\begin{aligned}
    \|a-\Pi_h a\|_{L^2(\SubDomain)}\leq \big(\max_{T\in\Triangulation} E_0(T)\big) |a|_{H^2(\SubDomain)},\\
    |a-\Pi_h a|_{H^1(\SubDomain)}\leq \big(\max_{T\in\Triangulation} E_1(T)\big) |a|_{H^2(\SubDomain)}.
\end{aligned}
\end{equation}
Here we define
\begin{Definition}[Local interpolation error constants]\label{def:LocalErrorC}
    \begin{equation}
\begin{aligned}
    E_0(T)=\sup \{|v|_0;v\in H^2(T), |v|_2=1,v(p_i)=0\text{ for }i=1,\dots,n\},\\
    E_1(T)=\sup \{|v|_1;v\in H^2(T),|v|_2=1,v(p_i)=0\text{ for }i=1,\dots,n\},
\end{aligned}
\end{equation}
for each $T\in\Triangulation$, where $p_1,\dots,p_n$ are the vertices of $T$.
\end{Definition}

\subsection{Finite Element Discretization}

The finite element solution $u_h\in V_{h,0}$ of problem \eqref{ExactProblem} is defined by
\begin{equation}\label{FEM}
    \int_{\SubDomain}\nabla u_h\cdot\nabla v_h dx=\int_{\SubDomain} f_h v_h dx, \forall v_h\in V_{h,0}
\end{equation}
Here $f_h\in L^2(\SubDomain)$ is an approximation of $f$, and is typically taken as the interpolation of $f$ onto $V_{h}$, and is extended by zero outside $\SubDomain$.

\subsection{Numerical Solution}
The Finite Element Equation \eqref{FEM} can be expressed as the following linear algebraic system:
\begin{equation}
    Ax=b,
\end{equation}
where
\begin{equation}
    \begin{aligned}
        &A_{ij}=\int_{\SubDomain}\nabla\varphi_i\cdot\nabla\varphi_j dx\text{ for }i,j=1,\dots,M,\\
        &b_i=\int_{\SubDomain} f_h\varphi_idx\text{ for }i=1,\dots,M,\\
        &u_h=\sum_{j=1}^{M}x_j\varphi_j.
    \end{aligned}
\end{equation}

The solution of $Ax=b$ inevitably involves rounding errors arising from finite-precision arithmetic. A detailed analysis of such errors lies outside the scope of this paper; see, for example, Higham~\cite{higham2002accuracy} for a comprehensive treatment of numerical accuracy and stability in floating-point computations.

\section{Upper Bound of Consistency Error}
\begin{figure}[H]
    \centering
    \includegraphics[width=0.8\linewidth]{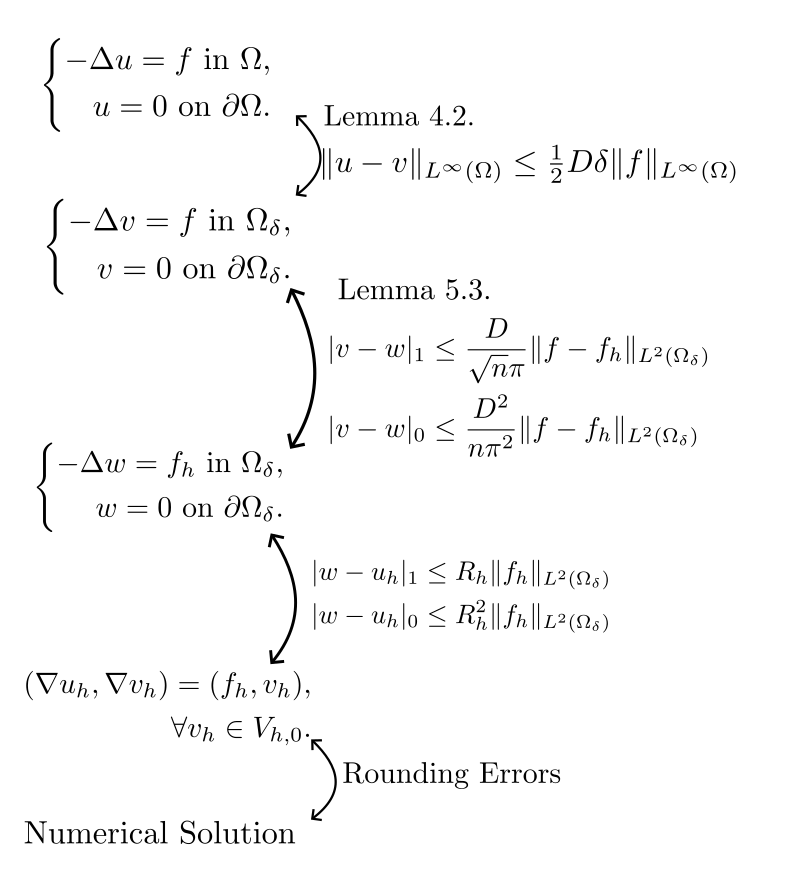}
    \caption{Hierarchical decomposition of the total consistency error.}
    \label{errorFig}
\end{figure}

In this section, we state the main results and the proof strategy.

To estimate the total consistency error, we introduce a sequence of auxiliary problems corresponding to different layers of approximation (see Figure \ref{errorFig}). Let $u,v,w$ denote the weak solutions of the corresponding boundary value problems shown in figure \ref{errorFig}, and let $u_h$ denote the finite element solution of the discrete problem. 

Each layer isolates one source of deviation between the exact and numerical solutions: geometric truncation, data projection, finite-element discretization, and rounding errors.

The errors arising from the approximation of the domain and the source term will be stated later in lemma \ref{lem: boundary},\ref{lem:force}.

Since $\SubDomain$ is convex and $f_h\in L^2(\Domain)$, it follows that $w\in H^2(\SubDomain)$. We can therefore apply the theorem 2.1 in \cite{Liu2024} and obtain
\begin{equation}\label{localEstimate}
    \left\{\begin{aligned}
        |w-u_h|_1\le \max_{T\in\Triangulation}\big(E_1(T)\big)\|f_h\|_{L^2 (\SubDomain)},\\
        |w-u_h|_0\le \max_{T\in\Triangulation}\big(E_1(T)^2\big)\|f_h\|_{L^2 (\SubDomain)}.
    \end{aligned}\right.
\end{equation}

Then from lemmas \ref{lem: boundary},\ref{lem:force}, and~\eqref{localEstimate}, together with the formulas 
for $E_1(T)$ proposed in \cite{KIKUCHI20073750,TetrahedronExplicit}, we obtain the following explicit estimate of total $L^2$ consistency error.

\begin{theorem}[Explicit Consistency Error]\label{thm:main}
    Let $u$ be the weak solution to \eqref{ExactProblem}, $u_h$ be the finite element solution solution to \eqref{FEM}. Then
    \begin{equation}
        |u-u_h|_0 \le \tfrac{1}{2}D\,|\Domain|^{\frac{1}{2}}\,\delta \,\maxForce+C_P(\SubDomain)^2\|f-f_h\|_{L^2(\SubDomain)}+A_h ^2\,\|f_h\|_{L^2(\SubDomain)}.
    \end{equation}
    Here, $C_P(\SubDomain)$ denotes the Poincaré constant of $\SubDomain$, defined as the inverse of the square root of the smallest eigenvalue of
    \begin{equation}
        \left\{\begin{aligned}
            -\Laplace u=\lambda u\text{ in }\SubDomain,\\
            u=0\text{ on }\SubBoundary,
        \end{aligned}\right.
    \end{equation}
    and it satisfies the geometric bound
\begin{equation}\label{eq:PoincareBound}
    C_P(\SubDomain) \le \frac{D}{\sqrt{n}\pi}.
\end{equation}
    Moreover,
    \begin{equation}
        A_h=\max_{T\in\Triangulation}\big(E_1(T)\big),
    \end{equation}
    which depends explicitly on the space dimension $n$ and the
global geometric properties of the mesh $\Triangulation$, as follows:
\begin{equation}
A_h \le
\begin{cases}
\textbf{For $n=2$:} &
\begin{cases}
R_h, & \text{(maximal circumradius, see~\eqref{def:Rh}),}\\[6pt]
0.69711\, \dfrac{\cos^2(\tfrac{\theta_0}{2})}{\sin(\tfrac{\theta_0}{2})}\, h,
    & \text{(if $\theta_0$ is the minimal angle),}\\[10pt]
\sqrt{\tfrac{11}{60}}\, h, & \text{(if $\Triangulation$ is non-blunt).}
\end{cases}\\[14pt]
\textbf{For $n=3$:} &
2.19\, \sigma \, h\quad \text{(if $h_T / \rho_T \le \sigma$ for all $T \in \Triangulation$).}
\end{cases}
\end{equation}
\end{theorem}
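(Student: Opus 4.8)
The plan is to control the total error by a single triangle inequality along the hierarchy of auxiliary problems of Figure \ref{errorFig}. Extending $v$ and $w$ by zero to all of $\Domain$, I would write
\[
|u-u_h|_0 \le |u-v|_0 + |v-w|_0 + |w-u_h|_0 ,
\]
so that each summand isolates exactly one layer of approximation, and then match the three terms to Lemma \ref{lem: boundary}, Lemma \ref{lem:force}, and the estimate \eqref{localEstimate} respectively.

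For $|u-v|_0$, the geometric truncation error, I would invoke Lemma \ref{lem: boundary} directly to obtain $\tfrac{1}{2}D\,|\Domain|^{1/2}\delta\,\maxForce$. For $|w-u_h|_0$, the discretization error, I would use \eqref{localEstimate}, observing that $\max_{T\in\Triangulation} E_1(T)^2 = \big(\max_{T\in\Triangulation} E_1(T)\big)^2 = A_h^2$ since squaring is monotone on nonnegative reals, giving $A_h^2\,\|f_h\|_{L^2(\SubDomain)}$. For the middle term $|v-w|_0$ supplied by Lemma \ref{lem:force}, the essential observation is the $L^2$ stability of the homogeneous Poisson problem on $\SubDomain$: the difference $z:=v-w\in H_0^1(\SubDomain)$ is the weak solution with source $f-f_h$, so testing the weak form against $z$ and applying the Poincaré inequality $\|z\|_0\le C_P(\SubDomain)\,|z|_1$ yields $|z|_1\le C_P(\SubDomain)\,\|f-f_h\|_0$, and a second application gives $\|z\|_0\le C_P(\SubDomain)^2\,\|f-f_h\|_0$, which explains the square on the Poincaré constant. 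Summing the three estimates produces the stated inequality.

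Two explicit geometric refinements then remain. For the Poincaré bound \eqref{eq:PoincareBound}, I would enclose $\SubDomain$ in an axis-aligned box with edge lengths $d_1,\dots,d_n$, whose smallest Dirichlet eigenvalue is exactly $\pi^2\sum_{i=1}^n d_i^{-2}$. Domain monotonicity of the first eigenvalue (valid because $\SubDomain$ lies inside the box) together with $d_i\le\operatorname{diam}(\SubDomain)\le D$ gives $\lambda_1(\SubDomain)\ge n\pi^2/D^2$, hence $C_P(\SubDomain)=\lambda_1^{-1/2}\le D/(\sqrt{n}\pi)$. For the bounds on $A_h$, I would substitute the explicit local constants for $E_1(T)$ from \cite{KIKUCHI20073750} in two dimensions and \cite{TetrahedronExplicit} in three dimensions into $A_h=\max_{T}E_1(T)$, replacing each local quantity by its global counterpart: the circumradius bound $E_1(T)\le R_T\le R_h$; the minimal-angle and non-blunt formulas after using $h_T\le h$ and bounding the interior angles below by $\theta_0$; and $E_1(T)\le 2.19\,\sigma\,h_T\le 2.19\,\sigma\,h$ under the regularity hypothesis.

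Given the three lemmas, the assembly above is essentially routine bookkeeping, and the real analytic work lives inside Lemmas \ref{lem: boundary} and \ref{lem:force}. Within the theorem itself the step demanding the most care is the passage from local to global constants for $A_h$: one must verify that each geometric expression is monotone in the relevant parameter---in particular that $\cos^2(\theta/2)/\sin(\theta/2)$ is decreasing in $\theta$ on $(0,\pi)$---so that evaluating at the worst-case global parameter ($\theta_0$, $R_h$, $h$, or $\sigma$) genuinely yields an upper bound rather than a mere representative value.
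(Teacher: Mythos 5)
Your overall assembly coincides with the paper's: the same triangle inequality $|u-u_h|_0\le|u-v|_0+|v-w|_0+|w-u_h|_0$ after zero extension, with the three terms handled by Lemma~\ref{lem: boundary} (together with the implicit step $|u-v|_0\le|\Domain|^{1/2}\|u-v\|_{L^\infty(\Domain)}$, which you use correctly), the energy/Poincar\'e argument of Lemma~\ref{lem:force}, and \eqref{localEstimate}. Your route to the Poincar\'e bound \eqref{eq:PoincareBound} via domain monotonicity of the first Dirichlet eigenvalue in an enclosing box is a legitimate variant of the paper's Fourier-sine-expansion proof of Lemma~\ref{lem:Poincare}; the two are equivalent in substance, since the zero extension to the box is exactly what justifies the monotonicity. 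Up to this point the proposal is sound.

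The genuine gap is the non-blunt bound $A_h\le\sqrt{\tfrac{11}{60}}\,h$. You describe all the $A_h$ bounds as ``replacing each local quantity by its global counterpart,'' but no cited formula contains the constant $\sqrt{11/60}$, and no monotone substitution produces it. A non-blunt triangle can have arbitrarily small minimal angle (a needle with two near-right base angles), so the minimal-angle formula degenerates on this class; and for non-blunt triangles the circumradius route only yields $R_T\le h_T/\sqrt{3}\approx 0.577\,h_T$, strictly worse than $\sqrt{11/60}\approx 0.428$. The paper obtains the constant by a separate constrained optimization: normalize $h_T=1$, place the longest edge at $(\pm\tfrac12,0)$ and the apex at $(a,b)$ subject to the non-bluntness constraints \eqref{eq:nonblunt_constraints}, substitute the resulting edge lengths and area into Kobayashi's formula \eqref{formula:Kobayashi2}, relax the expression to the quadratic $\tfrac{1}{30}b^2+\tfrac{11}{60}a^2+\tfrac{11}{80}$ using those constraints, and maximize along the circular-arc portion of the boundary of the admissible region to get exactly $\tfrac{11}{60}$, followed by a scaling argument for general $h_T$. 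Without this argument the third $n=2$ bound in the theorem is unjustified in your proposal. Two smaller points: the circumradius bound $E_1(T)\le R_T$ comes from \eqref{formula:Kobayashi2} (drop the nonnegative subtracted terms and use $R_T=ABC/(4S)$), not from the Liu--Kikuchi formula \eqref{formula:liu} you cite for dimension two; and in the minimal-angle bound, besides the monotonicity of $\cos^2(\tfrac{\theta}{2})/\sin(\tfrac{\theta}{2})$ that you correctly flag, one must also justify setting $\alpha=\beta=h_T$ in \eqref{formula:liu} (the paper does this by noting the longest edge is adjacent to the minimal angle and simplifying $\sqrt{2+2\cos 2\theta}=2\cos\theta$ before bounding), which is a short but non-vacuous computation.
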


\begin{Remark}[Upper Bound of $\|f-f_h\|_{L^2(\SubDomain)}$]
    Suppose $f\in C^1(\overline{\Domain})$, and $f_h$ is defined as the piecewise constant barycenters interpolation of $f$ on the elements $T\in\Triangulation$,
    \begin{equation}
        f_h=\sum_{T\in\Triangulation}f(g_T)\chi_T,
    \end{equation}
    where $g_T$ denotes the barycenter of $T$ and $\chi_T$ is its characteristic function. Then the following explicit bound holds.
    \begin{equation}\label{ineq:C1}
        \|f-f_h\|_{L^2(\SubDomain)}\leq \tfrac{n}{n+1}h|\SubDomain|^{\frac{1}{2}}\|\nabla f\|_{L^\infty(\Domain)}.
    \end{equation}
    Indeed, for any $x\in T$,
    \begin{equation}
        |f(x)-f(g_T)|\leq \|\nabla f\|_{L^\infty(\Domain)}|x-g_T|\leq \tfrac{n}{n+1}h_T \|\nabla f\|_{L^\infty(\Domain)}, 
    \end{equation}
    hence 
    \begin{equation}
        \|f-f_h\|_{L^2(T)}\le\tfrac{n}{n+1}h_T |T|^{\frac{1}{2}}\|\nabla f\|_{L^\infty(\Domain)},
    \end{equation}
    for each $T\in\Triangulation$. Summing over $T\in\Triangulation$ yields \eqref{ineq:C1}.

    If, alternatively, $f\in H^2(\Domain)$, and $f_h=\Pi_h f$, then
    \begin{equation}
         \|f-f_h\|_{L^2(\SubDomain)}\leq \max_{T\in\Triangulation}\big(E_0(T)\big)|f|_2,
    \end{equation}
    where $E_0(T)$ is defined in \eqref{def:LocalErrorC}.

    Explicit formula for $E_0(T)$ can be found in theorem~1 in \cite{Arcangeli} and Theorem~1.1 in \cite{kobayashi2025}, which imply the following upper bound.
    \begin{equation}
        \max_{T\in\Triangulation}E_0(T) \le
        \begin{cases}
        \sqrt{\tfrac{3}{83}}\, h^2, & \text{for } n = 2,\\[6pt]
        8\, h^2, & \text{for } n = 3,
        \end{cases}
        \end{equation}
\end{Remark}

As an example, we obtain the following explicit consistency error estimate for a two-dimensional non-blunt mesh, where $f\in H^2(\Domain)$.
\begin{corollary}
    Let $\Domain\subset \mathbb{R}^2$ be a convex, bounded domain ,$f\in H^2(\Domain)$ and $f_h=\Pi_h f$. Let $u$ be the weak solution of \eqref{ExactProblem}, and let $u_h$ be the finite element solution defined as \eqref{FEM}, where the ttriangulation $\Triangulation$ is non-blunt. Then the following explicit $L^2$ error bound holds.
    \begin{align}
        |u-u_h|_0\leq \frac{1}{2}D|\Domain|^{\frac{1}{2}}\delta\|f\|_{L^\infty}+0.1834h^2|f|_0\notag\\
        +9.632\times 10^{-3}D^2 h^2|f|_2+3.486\times 10^{-2}h^4 |f|_2
    \end{align}
        
\end{corollary}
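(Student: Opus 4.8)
The plan is to specialize Theorem~\ref{thm:main} to the stated setting ($n=2$, $f\in H^2(\Domain)$, $f_h=\Pi_h f$, and a non-blunt mesh) and substitute the explicit constants already assembled there and in the accompanying Remark. Theorem~\ref{thm:main} supplies the master inequality
\[
|u-u_h|_0 \le \tfrac{1}{2}D|\Domain|^{1/2}\delta\,\maxForce + C_P(\SubDomain)^2\|f-f_h\|_{L^2(\SubDomain)} + A_h^2\|f_h\|_{L^2(\SubDomain)},
\]
so the corollary follows once each of the three terms is dominated by the claimed quantity. The first term is already in final form and is copied verbatim.

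For the middle term I would insert the geometric Poincaré bound~\eqref{eq:PoincareBound} with $n=2$, giving $C_P(\SubDomain)^2\le D^2/(2\pi^2)$, and then use the interpolation estimate from the Remark following Theorem~\ref{thm:main}: since $f\in H^2(\Domain)$ and $f_h=\Pi_h f$, and since $\SubDomain\subset\overline{\Domain}$ so that $|f|_{H^2(\SubDomain)}\le|f|_2$, one has $\|f-f_h\|_{L^2(\SubDomain)}\le\sqrt{3/83}\,h^2|f|_2$. The product of the two constants $\tfrac{1}{2\pi^2}\sqrt{3/83}\approx 9.632\times10^{-3}$ yields the third summand $9.632\times10^{-3}D^2h^2|f|_2$ of the statement.

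For the last term I would use the non-blunt bound $A_h\le\sqrt{11/60}\,h$, hence $A_h^2\le\tfrac{11}{60}h^2$ with $\tfrac{11}{60}\approx0.1834$. The only step requiring slight care is that $\|f_h\|_{L^2(\SubDomain)}$ is not itself one of the seminorms appearing on the right-hand side; I would control it by the triangle inequality $\|f_h\|_{L^2(\SubDomain)}\le\|f\|_{L^2(\SubDomain)}+\|f-f_h\|_{L^2(\SubDomain)}\le|f|_0+\sqrt{3/83}\,h^2|f|_2$, again invoking $\SubDomain\subset\Domain$ for the first norm. Distributing $\tfrac{11}{60}h^2$ over these two pieces produces the coefficient $\tfrac{11}{60}\approx0.1834$ on $h^2|f|_0$ and $\tfrac{11}{60}\sqrt{3/83}\approx3.486\times10^{-2}$ on $h^4|f|_2$, matching the second and fourth summands.

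The argument is essentially arithmetic assembly, and I do not expect a genuine obstacle: all the substantive analysis—the barrier estimate for the boundary gap, the eigenvalue-based Poincaré bound, and the local interpolation constants—is already packaged into Theorem~\ref{thm:main} and its Remark. What deserves attention is bookkeeping: ensuring that rounding is performed upward so the bound remains valid, and verifying that every $\SubDomain$-norm is dominated by the corresponding $\Domain$-seminorm through the inclusion $\SubDomain\subset\overline{\Domain}$, so that the final estimate is phrased purely in terms of $D$, $|\Domain|$, $\delta$, $h$, $\maxForce$, $|f|_0$, and $|f|_2$.
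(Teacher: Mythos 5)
Your proposal is correct and is exactly the assembly the paper intends: the corollary is stated as a direct specialization of Theorem~\ref{thm:main}, obtained by inserting $C_P(\SubDomain)^2\le D^2/(2\pi^2)$, the bound $\|f-\Pi_h f\|_{L^2(\SubDomain)}\le\sqrt{3/83}\,h^2|f|_2$ from the Remark, the non-blunt bound $A_h^2\le\tfrac{11}{60}h^2$, and the triangle inequality $\|f_h\|_{L^2(\SubDomain)}\le|f|_0+\sqrt{3/83}\,h^2|f|_2$, and your constants $\tfrac{1}{2\pi^2}\sqrt{3/83}\approx9.632\times10^{-3}$, $\tfrac{11}{60}\le0.1834$, and $\tfrac{11}{60}\sqrt{3/83}\approx3.486\times10^{-2}$ all match the statement after upward rounding. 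No gaps.
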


\section{Boundary Perturbation}

We evaluate the effect of boundary perturbation by deriving the upper bound of $|u-v|$ in the gap $\Domain\setminus \SubDomain$, and applying maximum value principle in $\SubDomain$.

First, we characterize the gap $\Domain\setminus \SubDomain$ using signed distance functions defined on the facets $F \in \facets$ of $\SubDomain$.
\begin{Definition}
    \begin{equation}
        p_F(x)=\vec{n}_F\cdot (x-\vec{g}_F)
    \end{equation}
    for all $x\in \EucSp$, where $\vec{n}_F$ denotes the outward unit normal vector on $F$, and $\vec{g}_F$ is the barycenter of $F$.
\end{Definition}

Then since $\SubDomain$ is convex,
\begin{equation}
\begin{aligned}
    &\SubDomain=\{x\in \EucSp:p_F(x)<0,\forall F\in \facets\},\\
    &\Domain\setminus \SubDomain=\cup_{F\in\facets}\{x\in \Domain:p_F(x)\ge 0\}
\end{aligned}
\end{equation}

\begin{lemma}\label{lem: boundary}
    Let $u$ and $v$ be the weak solutions to the boundary value problems illustrated in Figure~\ref{errorFig}. 
Then
\begin{equation}
    \|u-v\|_{L^\infty(\Domain)}\le \frac{1}{2}\,D\,\delta\,\maxForce.
\end{equation}
\end{lemma}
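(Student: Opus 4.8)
The plan is to reduce the estimate to a pointwise bound on $|u|$ in the gap $\Domain\setminus\SubDomain$ and then to produce that bound by an explicit barrier. First I would observe that $u$ and $v$ solve the same equation $-\Laplace u=-\Laplace v=f$ inside $\SubDomain$, so their difference is harmonic there: $\Laplace(u-v)=0$ in $\SubDomain$. Since $v$ vanishes on $\SubBoundary$, the maximum principle gives $\|u-v\|_{L^\infty(\SubDomain)}\le\|u-v\|_{L^\infty(\SubBoundary)}=\|u\|_{L^\infty(\SubBoundary)}$. On the gap the extension convention makes $v\equiv0$, so $u-v=u$ there, and $\SubBoundary\subset\overline{\Domain}\setminus\SubDomain$. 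Consequently the whole estimate collapses to
\[
  \|u-v\|_{L^\infty(\Domain)}\le\|u\|_{L^\infty(\overline{\Domain}\setminus\SubDomain)},
\]
and it remains only to bound $|u|$ on the closed gap.

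For that bound I would use a barrier defined on all of $\Domain$ (not merely on the gap), chosen so that it is automatically small wherever $p_F\ge 0$. For a fixed facet $F\in\facets$ set
\[
  \psi_F(x)=\tfrac12\maxForce\,\big(p_F(x)+D\big)\big(\delta-p_F(x)\big).
\]
Because $p_F$ is affine with $|\nabla p_F|=1$, a direct computation gives $-\Laplace\psi_F=\maxForce$, so $-\Laplace(\psi_F\mp u)=\maxForce\mp f\ge0$; that is, $\psi_F\mp u$ are superharmonic. If I can show $\psi_F\ge0$ on $\Boundary$, the comparison principle yields $|u|\le\psi_F$ throughout $\Domain$.

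The positivity of $\psi_F$ on $\Boundary$ rests on two elementary geometric facts valid on all of $\overline{\Domain}$. The upper bound $p_F(x)\le\delta$ is exactly Definition~\ref{def:gap} together with the fact that an affine function attains its maximum over the convex compact set $\overline{\Domain}$ on $\Boundary$. The lower bound $p_F(x)\ge-D$ follows because $\vec{g}_F$ is a convex combination of vertices of $\SubDomain$ lying on $\Boundary$, hence $\vec{g}_F\in\overline{\Domain}$, so $|p_F(x)|\le|x-\vec{g}_F|\le D$. With $-D\le p_F\le\delta$ both factors in $\psi_F$ are nonnegative on $\overline{\Domain}$, giving $\psi_F\ge0$ and hence $|u|\le\psi_F$. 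Finally, for any $x$ in the gap there is a facet $F$ with $p_F(x)\ge0$; regarding $\psi_F$ as a downward parabola in $t=p_F(x)$ with roots at $-D$ and $\delta$, its vertex $t^\ast=(\delta-D)/2<0$ lies left of $[0,\delta]$, so $\psi_F$ is decreasing there and $\psi_F(x)\le\psi_F\big|_{t=0}=\tfrac12 D\delta\maxForce$. Taking the union over $F\in\facets$ covers the whole gap and yields the claim.

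The main obstacle—and the point I would emphasize—is the choice of barrier. A naive attempt to work only in the gap above $F$ forces one to control $u$ on the interior interface $\{p_F=0\}\cap\Domain$, where $u$ is only bounded by the crude global estimate $\tfrac18 D^2\maxForce$; this loses the factor $\delta/D$. The device that avoids this is to extend the barrier across the entire domain down to $p_F=-D$, placing its two roots at $-D$ and $\delta$: this is precisely what calibrates the value at $p_F=0$ to the sharp constant $\tfrac12 D\delta$ while keeping $\psi_F\ge0$ on all of $\Boundary$. A minor technical point is the regularity needed to invoke the comparison principle, but since $\Domain$ is convex and $f\in L^\infty$ we have $u\in H^2(\Domain)\cap C(\overline{\Domain})$ with $u=0$ on $\Boundary$, so the weak maximum principle applies without difficulty.
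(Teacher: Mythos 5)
Your proof is correct and takes essentially the same route as the paper: both arguments bound $|u|$ on the gap $\Omega\setminus\Omega_\delta$ facet by facet via a quadratic barrier in the facet-normal direction satisfying $-\Delta(\text{barrier})=\|f\|_{L^\infty(\Omega)}$ and nonnegativity on $\partial\Omega$, and then transfer the resulting bound $\tfrac12 D\delta\|f\|_{L^\infty(\Omega)}$ into $\Omega_\delta$ by the maximum principle applied to the harmonic difference $u-v$. The only difference is cosmetic: you write the barrier intrinsically as $\tfrac12\|f\|_{L^\infty(\Omega)}\,(p_F+D)(\delta-p_F)$ with roots at $p_F=-D$ and $p_F=\delta$, whereas the paper rotates coordinates so that $\vec{n}_F=-\vec{e}_1$ and uses $\tfrac12\|f\|_{L^\infty(\Omega)}\,x_1(D-x_1)$; on the slab $\{0\le p_F\le\delta\}$ both evaluate to the same constant.
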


\begin{proof}
    \textbf{Step 1.}We derive an upper bound of $|u-v|$ in the gap $\Domain\setminus\SubDomain$.

    Let $F$ be a facet of $\SubDomain$, and $r_0$ be a point on $\Boundary$ where the signed distance function $p_F$ attains the maximum.

    After translation and rotation of the coordinate axes, we set 
    \begin{equation}
        x_0=0,\,\vec{n}_F=-\vec{e}_1.
    \end{equation}
    Then
    \begin{equation}
        p_F(x) = \delta_0 - x_1 \quad \text{for all } x \in \EucSp ,
    \end{equation}
    where $\delta_0$ is the first coordinate of $\vec{g}_F$ and satisfies
    $0 \le \delta_0 \le \delta$.
    
    Define the barrier function
    \begin{equation}
        U(x)=\frac{1}{2}\,\maxForce\,x_1\,(D-x_1),
    \end{equation}
    so that $-\Laplace U=\maxForce$.
    Since
    \begin{equation}
        0\le x_1\le D,\,\forall x\in \overline{\Domain},
    \end{equation}
    we have
    \begin{equation}
        U(x)\ge 0,\,\forall x\in \Boundary.
    \end{equation}
    Because $u=0$ on $\Boundary$, the comparison principle yields
    \begin{equation}
        |u(x)|\le U(x)\text{ in }\Domain.
    \end{equation}
    Thus, 
    \begin{equation}
        |u(x)|\le \frac{1}{2}\,D\,\delta\,\maxForce\text{ in }\{x\in\Domain:p_F(x)\ge 0\}.
    \end{equation}

    Applying this argument to each $F\in \facets$ gives
    \begin{equation}
        |u(x)|\le \frac{1}{2}\,D\,\delta\,\maxForce\text{ in }\cup_{F\in\facets}\{x\in\Domain:p_F(x)\ge 0\}=\Domain\setminus \SubDomain.
    \end{equation}

    Since $v$ is extended by 0 in $\Domain\setminus\SubDomain$, we have
    \begin{equation}\label{ineq:gap}
        |u(x)-v(x)|\le \frac{1}{2}\,D\,\delta\,\maxForce\text{ for }x\in \Domain\setminus\SubDomain.
    \end{equation}

    \textbf{Step 2.} We derive an upper bound of $|u-v|$ in $\SubDomain$.

    Because
    \begin{equation}
        -\Laplace (u-v)=0\text{ in }\SubDomain,
    \end{equation}
    the maximum value principle implies
    \begin{equation}
        \max_{\overline{\SubDomain}}|u-v|= \max_{\SubBoundary}|u-v|.
    \end{equation}
    By $\SubBoundary\subset \Domain\setminus\SubDomain$ and \eqref{ineq:gap},
    \begin{equation}
        \max_{\SubDomain}|u-v|\le \frac{1}{2}\,D\,\delta\,\maxForce.
    \end{equation}
    This completes the proof.
\end{proof}
\begin{Remark}[Optimality]
    The constant $\tfrac{1}{2}$ is at most $n$ times the optimal constant for general convex domains in $\EucSp$.
    Indeed, if $\Domain = B(0, \tfrac{D}{2})$ and $f \equiv 1$, 
    the exact norm is
    \begin{equation}
        \|u - v\|_{L^\infty(\Domain)}
            = \tfrac{1}{2n} \, \delta (D - \delta).
    \end{equation}
\end{Remark}

\section{Approximation of Source term}
We study the perturbation of the source term with a standard energy analysis and a simple yet nearly sharp estimate of the Poincaré constant that only depends on the space dimension and the diameter of the domain.

\begin{lemma}[Poincaré inequality]\label{lem:Poincare}
    Let $V\subset\EucSp$ be a bounded open domain with $\operatorname{diam}(V) \le D$. Then, for all $u \in H_0^1(V)$,
    \begin{equation}
        \|u\|_{L^2(V)}\,\le\,\frac{D}{\sqrt{n}\pi}|u|_{H^1(V)}.
    \end{equation}
\end{lemma}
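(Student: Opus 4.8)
The plan is to reduce the $n$-dimensional inequality to the sharp one-dimensional Dirichlet Poincaré inequality applied along each coordinate axis, and then to recover the factor $\tfrac{1}{\sqrt{n}}$ by averaging the resulting $n$ estimates. The single-direction argument gives only the constant $D/\pi$; the gain comes entirely from exploiting all $n$ directions simultaneously.

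First I would establish the sharp one-dimensional estimate: for any $w \in H_0^1(0,L)$,
\[
\|w\|_{L^2(0,L)} \le \frac{L}{\pi}\,\|w'\|_{L^2(0,L)}.
\]
This follows from the Fourier sine expansion $w(t)=\sum_{k\ge 1} b_k \sin(k\pi t/L)$: one computes $\|w\|_{L^2}^2 = \tfrac{L}{2}\sum_k b_k^2$ and $\|w'\|_{L^2}^2 = \tfrac{L}{2}\sum_k (k\pi/L)^2 b_k^2$, and every factor $(k\pi/L)^2$ is bounded below by $(\pi/L)^2$. Equivalently, $(\pi/L)^2$ is the smallest Dirichlet eigenvalue of $-\partial_t^2$ on $(0,L)$.

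Next I would fix a direction $i\in\{1,\dots,n\}$ and extend $u$ by zero to all of $\EucSp$, so that $u\in H^1(\EucSp)$ with compact support. Because $\operatorname{diam}(V)\le D$, the projection of $\operatorname{supp} u$ onto the $x_i$-axis is contained in an interval of length at most $D$. For almost every choice of the remaining coordinates $x' = (x_1,\dots,\widehat{x_i},\dots,x_n)$, the slice $t\mapsto u(x',t)$ lies in $H^1(\mathbb{R})$ and is supported in that interval, hence belongs to $H_0^1$ of an interval of length $\le D$. Applying the one-dimensional estimate to each slice and integrating in $x'$ via Fubini yields
\[
\|u\|_{L^2(V)}^2 \le \frac{D^2}{\pi^2}\,\|\partial_i u\|_{L^2(V)}^2 .
\]

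Finally, this holds for each $i=1,\dots,n$; summing over $i$ and using $\sum_i \|\partial_i u\|_{L^2(V)}^2 = |u|_{H^1(V)}^2$ gives
\[
n\,\|u\|_{L^2(V)}^2 \le \frac{D^2}{\pi^2}\,|u|_{H^1(V)}^2 ,
\]
which is the claimed bound after dividing by $n$ and taking square roots. I expect the only delicate point to be the slicing step: one must justify that the restriction of a Sobolev function to almost every line parallel to the $x_i$-axis is itself a one-dimensional Sobolev function vanishing outside the support interval. This is the standard absolute-continuity-on-lines characterization of $H^1$, so it is routine rather than genuinely hard; the conceptual core of the argument is simply the observation that averaging over the $n$ coordinate directions sharpens the naive constant $D/\pi$ to $D/(\sqrt{n}\,\pi)$.
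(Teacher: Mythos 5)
Your proof is correct, but it reaches the constant by a genuinely different route than the paper. The paper encloses $V$ in the cube $W=(0,D)^n$, extends $u$ by zero to $H_0^1(W)$, and expands it in the $n$-dimensional Fourier sine basis: Parseval gives $\|u\|_{L^2(W)}^2=\sum_\alpha |c_\alpha|^2$ and $|u|_{H^1(W)}^2=(\pi/D)^2\sum_\alpha |c_\alpha|^2\big(\sum_{k=1}^n \alpha_k^2\big)$, and the constant $D/(\sqrt{n}\,\pi)$ falls out of the single inequality $\sum_{k=1}^n \alpha_k^2 \ge n$, valid because every $\alpha_k\ge 1$. You instead prove the sharp one-dimensional inequality (itself by a sine expansion), apply it to almost every line parallel to each coordinate axis via Fubini, and sum the $n$ directional estimates; your summation over directions is exactly the same mechanism that the paper's bound $\sum_k \alpha_k^2\ge n$ encodes frequency-wise, so the $\sqrt{n}$ gain has the identical source in both arguments. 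What your route buys: it sidesteps any justification of convergence and term-wise differentiation of the multi-dimensional sine series (which the paper passes over silently), needing only the one-dimensional expansion plus the absolute-continuity-on-lines characterization of $H^1$ to legitimize the slicing --- a point you correctly flag, along with the observation that an $H^1(\mathbb{R})$ slice vanishing outside an interval of length at most $D$ restricts to an $H_0^1$ function of that interval. What the paper's route buys: the global spectral computation makes transparent that the estimate is domain monotonicity of the first Dirichlet eigenvalue, $C_P(V)\le C_P\big((0,D)^n\big)$, which dovetails with the paper's running identification of $C_P$ as an inverse square-root eigenvalue. Both proofs consume the same geometric input (diameter at most $D$ forces every coordinate projection into an interval of length at most $D$) and deliver the identical constant.
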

\begin{proof}
    We translate the coordinate axis so that 
    \begin{equation}
        V\subset W:=(0,D)^n.
    \end{equation}
    Since $u \in H_0^1(V)$, its zero extension to $L$ also belongs to $H_0^1(W)$, and hence admits a Fourier sine expansion
    \begin{equation}\label{eq:Fourier}
        u(x) = (\frac{2}{D})^n\sum_{\alpha \in \mathbb{N}^n} c_\alpha 
    \prod_{i=1}^n \sin\!\left(\frac{\pi \alpha_i x_i}{D}\right),
    \end{equation}

    In addition,
    \begin{equation}\label{eq:FourierDer}
        \partial_k u(x)
    = \left(\frac{2}{D}\right)^n 
    \sum_{\alpha \in \mathbb{N}^n} 
    c_\alpha \, \frac{\pi \alpha_k}{D} 
    \cos\!\left(\frac{\pi \alpha_k x_k}{D}\right)
    \prod_{\substack{i=1 \\ i \ne k}}^{n} 
    \sin\!\left(\frac{\pi \alpha_i x_i}{D}\right).
    \end{equation}

Thus, 
\begin{equation}
    \begin{aligned}
        &\|u\|_{L^2(W)}^2=\sum_{\alpha \in \mathbb{N}^n}|c_\alpha|^2,\\
        &|u|_{H^1(W)}^2=(\frac{\pi}{D})^2\sum_{\alpha \in \mathbb{N}^n}|c_\alpha|^2(\sum_{k=1}^{n}|\alpha_k|^2).
    \end{aligned}
\end{equation}
It follows that
\begin{equation}
    \|u\|_{L^2(W)}^2\le \frac{D^2}{n\pi^2}|u|_{H^1(W)}^2.
\end{equation}
Finally, since $u$ is extended by 0 outside $V$,
\begin{equation}
    \|u\|_{L^2(V)}=\|u\|_{L^2(W)},\,|u|_{H^1(V)}=|u|_{H^1(W)}
\end{equation}
This completes the proof.
\end{proof}
\begin{Remark}[Optimality]
    The exact Poincaré constant is the inverse of the square root of the smallest Dirichlet eigenvalue of $-\Laplace$ on the domain $V$. The above estimate is nearly sharp for general bounded domains in $\mathbb{R}^n$, since the exact constants for the ball $B(0,\tfrac{D}{2})$ are approximately 
$0.208\,D$ when $n=2$, and exactly $\tfrac{D}{2\pi}$ when $n=3$.
\end{Remark}

\begin{lemma}\label{lem:force}
    Let $v$ and $w$ be the weak solutions to the boundary value problems illustrated in Figure~\ref{errorFig}. 
    Then 
    \begin{equation}
        \begin{aligned}
            &|v-w|_1\le \,C_P(\SubDomain)\|f-f_h\|_{L^2(\SubDomain)}\\
            &|v-w|_0\leq C_P(\SubDomain)^2 \|f-f_h\|_{L^2(\SubDomain)},
        \end{aligned}
    \end{equation}
    where the Poincaré constant satisfies
    \begin{equation}
        C_P(\SubDomain)\le \frac{D}{\sqrt{n}\pi}.
    \end{equation}
\end{lemma}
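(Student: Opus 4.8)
The plan is to estimate the error $v-w$ arising purely from the source-term perturbation, using a standard Galerkin-orthogonality / energy argument on the fixed domain $\SubDomain$, followed by a duality (Aubin--Nitsche) argument to upgrade the $H^1$ estimate to an $L^2$ estimate. Both $v$ and $w$ are weak solutions in $H_0^1(\SubDomain)$ of the same boundary value problem but with different right-hand sides: $v$ solves the problem with data $f$ and $w$ solves it with data $f_h$. Subtracting the two weak formulations gives, for all $\psi\in H_0^1(\SubDomain)$,
\begin{equation}\label{eq:plan-orth}
    \int_{\SubDomain}\nabla(v-w)\cdot\nabla\psi\,dx=\int_{\SubDomain}(f-f_h)\psi\,dx.
\end{equation}

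First I would establish the $H^1$ bound. Taking $\psi=v-w$ in~\eqref{eq:plan-orth} yields $|v-w|_1^2=\int_{\SubDomain}(f-f_h)(v-w)\,dx$, and by Cauchy--Schwarz the right-hand side is at most $\|f-f_h\|_{L^2(\SubDomain)}\|v-w\|_{L^2(\SubDomain)}$. Applying the Poincaré inequality from Lemma~\ref{lem:Poincare} (valid since $v-w\in H_0^1(\SubDomain)$ and $\operatorname{diam}(\SubDomain)\le D$) gives $\|v-w\|_{L^2(\SubDomain)}\le C_P(\SubDomain)|v-w|_1$, so after dividing by $|v-w|_1$ we obtain $|v-w|_1\le C_P(\SubDomain)\|f-f_h\|_{L^2(\SubDomain)}$, which is the first claimed inequality. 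The geometric bound $C_P(\SubDomain)\le D/(\sqrt{n}\pi)$ is immediate from Lemma~\ref{lem:Poincare}.

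For the $L^2$ bound I would use duality. Let $z\in H_0^1(\SubDomain)$ solve $-\Laplace z=v-w$ in $\SubDomain$ with $z=0$ on $\SubBoundary$, so that $\int_{\SubDomain}\nabla z\cdot\nabla\psi\,dx=\int_{\SubDomain}(v-w)\psi\,dx$ for all $\psi\in H_0^1(\SubDomain)$. Choosing $\psi=v-w$ gives $\|v-w\|_{L^2(\SubDomain)}^2=\int_{\SubDomain}\nabla z\cdot\nabla(v-w)\,dx$, and then applying~\eqref{eq:plan-orth} with $\psi=z$ rewrites this as $\int_{\SubDomain}(f-f_h)z\,dx$. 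Bounding by Cauchy--Schwarz and then Poincaré on $z$, namely $\|z\|_{L^2(\SubDomain)}\le C_P(\SubDomain)|z|_1$ together with the energy identity $|z|_1^2=\|v-w\|_{L^2(\SubDomain)}^2$, I expect to land on $\|v-w\|_{L^2(\SubDomain)}^2\le C_P(\SubDomain)^2\|f-f_h\|_{L^2(\SubDomain)}\|v-w\|_{L^2(\SubDomain)}$, and dividing through gives the second inequality.

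The argument is essentially routine, so I do not anticipate a genuine obstacle; the only point requiring a little care is the well-posedness of the dual problem and the justification that its solution $z$ lies in $H_0^1(\SubDomain)$, which follows from the Lax--Milgram theorem since $v-w\in L^2(\SubDomain)$. One should also verify that the factor of $C_P(\SubDomain)$ rather than $C_P(\SubDomain)^2$ does not accidentally creep in when applying Poincaré twice in the duality step; tracking the single Poincaré application to $z$ (and using the energy identity to eliminate $|z|_1$) is what produces the squared constant, and this bookkeeping is the one place where a sign or exponent slip would matter.
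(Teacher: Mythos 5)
Your $H^1$ estimate coincides with the paper's argument: subtract the weak formulations, test with $v-w$, apply Cauchy--Schwarz and the Poincar\'e inequality of Lemma~\ref{lem:Poincare}. The $L^2$ step, however, contains a genuine error: the ``energy identity'' $|z|_1^2=\|v-w\|_{L^2(\SubDomain)}^2$ is false. Testing the dual problem with $\psi=z$ gives $|z|_{H^1(\SubDomain)}^2=\int_{\SubDomain}(v-w)\,z\,dx$, an inner product, which equals $\|v-w\|_{L^2(\SubDomain)}^2$ only in degenerate situations. What is true is the \emph{inequality} $|z|_{H^1(\SubDomain)}\le C_P(\SubDomain)\,\|v-w\|_{L^2(\SubDomain)}$, obtained from that identity by Cauchy--Schwarz and Poincar\'e. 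The slip is not harmless bookkeeping: if your identity held, your chain $\|v-w\|_{L^2(\SubDomain)}^2\le \|f-f_h\|_{L^2(\SubDomain)}\|z\|_{L^2(\SubDomain)}\le C_P(\SubDomain)\|f-f_h\|_{L^2(\SubDomain)}\,|z|_{H^1(\SubDomain)}$ would deliver $\|v-w\|_{L^2(\SubDomain)}\le C_P(\SubDomain)\|f-f_h\|_{L^2(\SubDomain)}$ with a \emph{single} factor of $C_P$ --- a bound that is false in general, since the sharp $L^2\to L^2$ constant of $(-\Laplace)^{-1}$ with Dirichlet conditions is $\lambda_1^{-1}=C_P(\SubDomain)^2$ (take $f-f_h$ proportional to the first Dirichlet eigenfunction). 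So the $C_P^2$ you ``expect to land on'' does not actually follow from the steps as written; it does follow once you replace the false identity by the corrected inequality for $|z|_{H^1(\SubDomain)}$, which supplies the second factor of $C_P$.

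Once repaired, your duality route is valid, but it is heavier machinery than the statement needs, and it is not what the paper does. Because the operator is self-adjoint and the right-hand side perturbation acts on the same problem, the paper skips the dual problem entirely: it applies the Poincar\'e inequality a second time directly to $v-w$, namely $\|v-w\|_{L^2(\SubDomain)}\le C_P(\SubDomain)\,|v-w|_{H^1(\SubDomain)}\le C_P(\SubDomain)^2\,\|f-f_h\|_{L^2(\SubDomain)}$, using the $H^1$ bound already established. This removes your only delicate point (well-posedness of the dual problem and the $|z|_1$ bookkeeping) at no cost in the constant. One further small omission shared by your write-up: since $v$ and $w$ are extended by zero outside $\SubDomain$, one should record that $|v-w|_1=|v-w|_{H^1(\SubDomain)}$ and $|v-w|_0=\|v-w\|_{L^2(\SubDomain)}$, so that the bounds proved on $\SubDomain$ really give the seminorms appearing in the lemma; the paper states this explicitly.
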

\begin{proof}
    We conduct a standard energy estimate. 
Subtracting the weak formulations for $v$ and $w$ yields
\[
-\Delta (v-w) = f - f_h \quad \text{in } \SubDomain, 
\qquad v-w = 0 \text{ on } \SubBoundary.
\]
Testing with $v-w$ and applying the Cauchy--Schwarz inequality gives
\[
|v-w|_{H^1(\SubDomain)}^2 
    \le \|f-f_h\|_{L^2(\SubDomain)} \|v-w\|_{L^2(\SubDomain)}.
\]
Using the Poincaré inequality for $v-w \in H_0^1(\SubDomain)$, we obtain
\[
|v-w|_{H^1(\SubDomain)} 
    \le C_P(\SubDomain)\, \|f-f_h\|_{L^2(\SubDomain)},
\]
and another application of the same inequality yields
\[
\|v-w\|_{L^2(\SubDomain)} 
    \le C_P(\SubDomain)^2\, \|f-f_h\|_{L^2(\SubDomain)}.
\]

Since $v$ and $w$ are extended by zero outside $\SubDomain$, it follows that
\begin{equation}
    |v - w|_1 = |v - w|_{H^1(\SubDomain)}, 
    \qquad 
    |v - w|_0 = \|v - w\|_{L^2(\SubDomain)}.
\end{equation}
By Lemma~\ref{lem:Poincare}, the Poincaré constant satisfies 
$C_P(\SubDomain) \le D / (\sqrt{n}\,\pi)$.

This completes the proof.
\end{proof}

\section{Total consistency Error}
We prove Theorem~\ref{thm:main} using Lemmas~\ref{lem: boundary} and~\ref{lem:force}, together with the interpolation error estimate.

\begin{proof}
    By isolating the sources of consistency error, we obtain
    \begin{equation}
    \begin{aligned}
        |u-u_h|_0\le &|u-v|_0+|v-w|_0+|w-u_h|_0\\
        \le &|\Domain|^{\frac{1}{2}}\|u-v\|_{L^\infty(\Domain)}+C_P(\SubDomain)^2\|f-f_h\|_{L^2(\SubDomain)}+|w-u_h|_0\\
        \le &\tfrac{1}{2}D\,|\Domain|^{\frac{1}{2}}\,\delta \,\maxForce+C_P(\SubDomain)^2\|f-f_h\|_{L^2(\SubDomain)}+|w-u_h|_0.
    \end{aligned}
    \end{equation}
    It remains to estimate the last term $|w - u_h|_0$.

    For completeness, we recall the standard interpolation estimate (see, e.g.\cite{Liu2024}) \eqref{localEstimate}.

    The Galerkin orthogonality gives
    \begin{equation}
        |w-u_h|_{H^1(\SubDomain)}\leq |w-\Pi_h w|_{H^1(\SubDomain)}.
    \end{equation}
    Using the local interpolation error estimate and the Miranda--Talenti inequality (\cite{MTineq}), we obtain
    \begin{equation}
        \begin{aligned}
            |w-\Pi_h w|_{H^1(\SubDomain)}^2=\sum_{T\in\Triangulation}|w-\Pi_h w|_{H^1(T)}^2\le \sum_{T\in\Triangulation} E_1(T)^2 |w|_{H^2(T)}^2\\
            \le A_h^2 |w|_{H^2(\SubDomain)}^2\le A_h^2 \|-\Laplace w\|_{L^2(\SubDomain)}^2= A_h^2 \|f_h\|_{L^2(\SubDomain)}^2..
        \end{aligned}
    \end{equation}
    This gives the $H^1$ seminorm estimate.
    
    Applying the Aubin–Nitsche duality (adjoint) argument then gives
    \begin{equation}
        \|w-u_h\|_{L^2(\SubDomain)}\leq A_h |w-u_h|_{H^1(\SubDomain)}\leq A_h^2 \|f_h\|_{L^2(\SubDomain)}^2,
    \end{equation}
    which is the $L^2$ estimate.

    Here we denote
    \begin{equation}
        A_h=\max_{T\in\Triangulation}E_1(T).
    \end{equation}

    For 2-dimensional triangular meshes, Liu~\cite{KIKUCHI20073750} proposed an explicit formula for the local interpolation constant $E_1(T)$ of a triangular element $T$.
    \begin{figure}[H]
        \centering
        \includegraphics[width=0.3\linewidth]{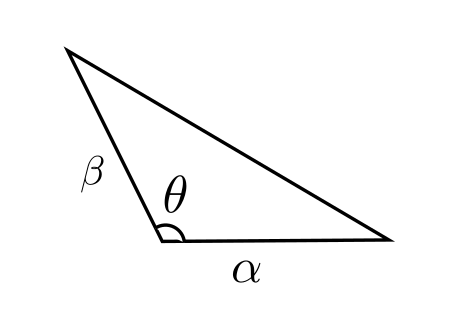}
        \caption{$\alpha$,$\beta$,$\theta$ of triangle T}
        \label{fig:FormulaLiu}
    \end{figure}
    \begin{equation}\label{formula:liu}
        E_1(T)\leq 0.49293\,\frac{1+|\cos\theta|}{\sin\theta}\,\sqrt{\frac{\alpha^2+\beta^2+\sqrt{\alpha^4+2\alpha^2 \beta^2\cos2\theta+\beta^4}}{2}},
    \end{equation}
    where $\alpha$ and $\beta$ denote the lengths of the two edges adjacent to the interior angle $\theta$.

    Besides, Kobayashi~\cite{kobayashi2025} proposed another explicit expression for the local interpolation constant
    \begin{equation}\label{formula:Kobayashi2}
    E_1(T)\le\sqrt{\frac{A^2 B^2 C^2}{16 S^2}
    - \frac{A^2 + B^2 + C^2}{30}
    - \frac{S^2}{5}\left(
    \frac{1}{A^2} + \frac{1}{B^2} + \frac{1}{C^2}
    \right)
    },
\end{equation}
    where $A,B$ and $C$ are the lengths of the three edges of the triangle, and $S$ denotes the area. This formula provides a considerably sharper estimate than~\eqref{formula:liu} for degenerate (highly acute or obtuse) triangles.

    For 3-dimensional tetrahedral meshes, Kobayashi\cite{TetrahedronExplicit} has proposed the explicit bound
    \begin{equation}\label{formula:Kobayashi3}
        E_1(T)\le 2.19\,\frac{\operatorname{diam}(T)^2}{\rho(T)}.
    \end{equation}
    
    An upper bound of $A_h$ can be obtained by element-wise evaluation of the above formulas \eqref{formula:liu},\eqref{formula:Kobayashi2},\eqref{formula:Kobayashi3}. Based on these expressions, we also derive explicit upper bounds of the global constant $A_h$ of typical simplicial meshes. For \textbf{3-dimensional} meshes, we present a bound involving regularity condition. For \textbf{2-dimensional} meshes, these bounds depend only on global geometric parameters of the mesh:
    The maximal element diameter $h$, the minimal interior angle $\theta_0$, and the maximal circumscribed radius $R_h$.
    In addition, we establish a bound depending solely on $h$ for meshes consisting of \emph{non-blunt} triangles,
    which is particularly relevant for ensuring the validity of the Discrete Maximum Principle.

    \paragraph{Evaluation by maximal circumradius in two dimensions.}
    It follows directly from~\eqref{formula:Kobayashi2} that
    \begin{equation}
    A_h \le R_h,
    \end{equation}
    where $R_h:=\max_{T\in\Triangulation}R_T$ is the maximal circumsradius of the mesh. Indeed, for each triangle $T\in\Triangulation$ with edge lengths $A,B,C$ and area $S$,
    \begin{equation}
        R_T=\frac{ABC}{4S}.
    \end{equation}
    \paragraph{Evaluation by mesh size and minimal angle in two dimensions.}
    Let $T\in\Triangulation$ be a triangular element, and let its longest edge be denoted as $e_1$. Then $|e_1|=h_T$, and is adjacent to the minimal angle of $T$, which we denote as $\theta$. Let $\beta\le h_T$ denote the length of the other edge adjacent to $\theta$.

    Substituting these notations into~\eqref{formula:liu} yields
    \begin{equation}
    \begin{aligned}
        E_1(T)\leq 0.49293\,h_T\frac{1+\cos\theta}{\sin\theta}\,\sqrt{\frac{2+\sqrt{2+2\cos2\theta}}{2}}\\
        \le0.49293\,h_T\frac{(1+\cos\theta)^\frac{3}{2}}{\sin\theta}\\
        =0.49293\,\sqrt{2}\,h_T \frac{\cos^2(\frac{\theta}{2})}{\sin(\frac{\theta}{2})}\\
        \le 0.69711\,h\,\frac{\cos^2(\frac{\theta_0}{2})}{\sin(\frac{\theta_0}{2})}.
    \end{aligned}
    \end{equation}
    This gives an upper bound of $A_h$ represented by mesh size and the minimal angle.

    \paragraph{Evaluation by mesh size for non-blunt meshes in two dimensions}
    We evaluate the interpolation constant by \eqref{formula:Kobayashi2}.
    Let $T\in\Triangulation$ be a triangular element, and let its longest edge be denoted as $e_1$, and the vertex opposite to $e_1$ be denoted as $P$.We assume that $T$ satisfies the non-blunt condition, namely that all interior angles are less than $\pi/2$.

    \textbf{case 1. $h_T=1$}

    We establish a coordinate system such that the endpoints of $e_1$ are located at $(-\tfrac{1}{2},0)$ and $(\tfrac{1}{2},0)$, and the vertex $P$ is placed at $(a,b)$ with $a\ge 0,b>0$. 

    Then since $h_T=1$, and $T$ is not blunt, $(a,b)$ satisfies
    \begin{equation}\label{eq:nonblunt_constraints}
        \begin{aligned}
            (a+\tfrac{1}{2})^2+b^2\le 1,\\
            (a-\tfrac{1}{2})^2+b^2\le 1,\\
            a^2+b^2\ge \tfrac{1}{4}.
        \end{aligned}
    \end{equation}

   \begin{figure}[H]
       \centering
       \includegraphics[width=0.5\linewidth]{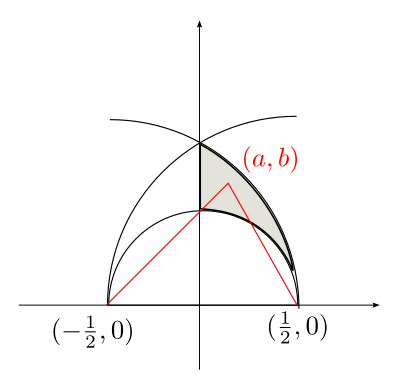}
       \caption{Constraints of $(a,b)$}
       \label{fig:constraints}
   \end{figure}
    
    Consequently, the three edge lengths of~$T$ are
    \begin{equation}
        \ell_1 = 1, \qquad
        \ell_2 = \sqrt{(a-\tfrac{1}{2})^2 + b^2}, \qquad
        \ell_3 = \sqrt{(a+\tfrac{1}{2})^2 + b^2},
    \end{equation}
    and the area of~$T$ is given by
    \begin{equation}
        |T| = \tfrac{1}{2}\,b.
    \end{equation}

    Substituting these values into \eqref{formula:Kobayashi2} gives
    \begin{equation}
        \begin{aligned}
            E_1(T)^2\le\frac{2}{15}b^2+\frac{13}{30}a^2+\frac{3}{40}+\frac{(\tfrac{1}{4}-a^2)^2}{4b^2}\\
            -\frac{1}{20}\frac{b^2}{(a+\tfrac{1}{2})^2+b^2}-\frac{1}{20}\frac{b^2}{(a-\tfrac{1}{2})^2+b^2}.
        \end{aligned}
    \end{equation}
    By \eqref{eq:nonblunt_constraints},
    \begin{equation}\label{ineq:relaxed}
        \begin{aligned}
            E_1(T)^2\le\frac{2}{15}b^2+\frac{13}{30}a^2+\frac{3}{40}+\frac{1}{4}(\frac{1}{4}-a^2)
            -\frac{1}{20}b^2-\frac{1}{20}b^2\\
            =\frac{1}{30}b^2+\frac{11}{60}a^2+\frac{11}{80}.
        \end{aligned}
    \end{equation}
    The maximum of the last row of \eqref{ineq:relaxed} can occur only along the boundary curve
    \begin{equation}
        \{(-\tfrac{1}{2}+\cos\theta,\sin\theta);0\le\theta\le\frac{\pi}{3}\}.
    \end{equation}
    Substituting this parametrization into the last \eqref{ineq:relaxed} yields
    \begin{equation}
        E_1(T)^2\le\max_{0\le\theta\le\tfrac{\pi}{3}} \big(\tfrac{13}{60}+\tfrac{3}{20}\cos^2\theta-\tfrac{11}{60}\cos\theta\big)=\frac{11}{60}
    \end{equation}
    Thus,
    \begin{equation}\label{ineq:normalizedTri}
        E_1(T)\le \sqrt{\frac{11}{60}},
    \end{equation}
    if $h_T=1$.

    \textbf{the general case.}
    It follows from \eqref{ineq:normalizedTri} and scaling arguments that
    \begin{equation}
        E_1(T)=h_TE_1(h_T^{-1}T)\le \sqrt{\frac{11}{60}}\,h_T\le \sqrt{\frac{11}{60}}\,h.
    \end{equation}
    This provides an explicit upper bound of~$A_h$ represented solely by the maximal mesh size~$h$
    for meshes consisting of non-blunt triangles.

    \paragraph{Evaluation by regularity and mesh size in three dimensions}
    It directly follows from \eqref{formula:Kobayashi3} that
    \begin{equation}
        A_h\le 2.19\,\sigma h,
    \end{equation}
    if the mesh satisfies
    \begin{equation}
        h_T/\rho_T\le\sigma,\forall T\in\Triangulation.
    \end{equation}

    We have thus completed the evaluation of the total consistency error
and derived explicit interpolation constants that depend only on global geometric parameters of the mesh.
\end{proof}

\section{Numerical Examples}
In this section, we present some numerical results to verify the validity of our error estimates. We consider the Poisson equation in the 2-dimensional unit disk
\begin{equation}\label{eq:test}
   \left\{ \begin{aligned}
        -\Laplace u=1\text{ in }B(0,1),\\
        u=0\text{ on }\partial B(0,1).
    \end{aligned}\right.
\end{equation}
The exact solution is given by
\begin{equation}
    u(x)=\tfrac{1}{4}(1-x^2).
\end{equation}
The problem is solved using \textbf{FreeFEM}.

We approximate the unit disk with a sequence of regular m-gons $\Omega_m$ with radius 1. For each polygonal domain $\Omega_m$, a triangulation $\mathcal{T}_m$ is generated with the Delauney algorithm implemented in \textbf{FreeFEM}.
The finite element discretization reads:
\begin{equation}
\int_{\Omega_m} \nabla u_m \cdot \nabla v_m dx
= \int_{\Omega_m} v_m dx,
\, \forall v_m \in V_m,
\end{equation}
where $V_m$ denotes the standard $P_1$ finite element space with homogeneous Dirichlet boundary condition. associated with $\mathcal{T}_m$.

Then theorem \ref{thm:main} predicts that
\begin{equation}\label{ineq:prediction}
    \|u-u_m\|_{L^2\!\big(B(0,1)\big)}\le \pi^\frac{1}{2} \left(A_m^2 + 2\,\sin^2\frac{\pi}{2m}\right),
\end{equation}
where $A_m$ will be computed with Kobayashi's formula \eqref{formula:Kobayashi2} after the generation of the mesh.

The actual $L^2$ error is given by
\begin{equation}\label{eq:actualError}
\begin{aligned}
    \|u-u_m\|_{L^2\!\big(B(0,1)\big)}^2=\int_{\Omega\setminus\Omega_m} |u(x)|dx +\int_{\Omega_m} |u(x)-u_m(x)|^2dx\\
    =\frac{m}{16}\int_{-\frac{\pi}{m}}^{\frac{\pi}{m}}d\theta\int_{\frac{\cos \frac{\pi}{m}}{\cos\theta}}^{1} (1-r^2)^2rdr+\int_{\Omega_m} |u(x)-u_m(x)|^2dx.
\end{aligned}
\end{equation}
The first term in \eqref{eq:actualError} is evaluated numerically in \textbf{Mathematica}, and the second term is computed numerically by integration over the mesh in \textbf{FreeFEM}.

The following table shows the numerically computed actual $L^2$ errors and the corresponding theoretical predictions for increasing values of $m$. Each numerical result is rounded upward and displayed with three significant digits after the decimal point.

\begin{table}[H]
\centering
\caption{Actual and Predicted Bound of $L^2$ Error}
\begin{tabular}{|c|c|c|c|c|c|c|c|c}
\hline
m  & 10  & 20  & 30  & 40  & 50 \\ \hline
actual  & 4.768e-2 & 1.303e-2  & 5.910e-3 & 3.248e-3 & 2.127e-3\\ \hline
predicted bound  & 2.397e-1  & 7.688e-2 & 4.368e-2 & 2.531e-2 & 1.672e-2 \\ \hline
\end{tabular}
\label{tab:example}
\end{table}
The predicted bound \eqref{ineq:prediction} based on Theorem \ref{thm:main} is valid and differs from the actual error by less than one order of magnitude.

The corresponding \textbf{FreeFEM} code is provide below.
\begin{lstlisting}[language=C++, caption={FreeFEM code}]
// approximation of the disk with a regular m-gon
int m=50;
real pi = 4*atan(1);
real f = 1.0;
border a(t=0, 2*pi){x=cos(t); y=sin(t); label=1;}

mesh disk = buildmesh(a(m));

// Computing Maximal Local Error Constant
real E2max = 0.0;

for (int k = 0; k < disk.nt; ++k) {
    // Triangle vertices
    real x0 = disk[k][0].x, y0 = disk[k][0].y;
    real x1 = disk[k][1].x, y1 = disk[k][1].y;
    real x2 = disk[k][2].x, y2 = disk[k][2].y;

    // Edge lengths
    real A2 =  (x1 - x2)^2 + (y1 - y2)^2 ;
    real B2 =  (x0 - x2)^2 + (y0 - y2)^2 ;
    real C2 =  (x0 - x1)^2 + (y0 - y1)^2 ;

    // Triangle area 
    real area = abs( disk[k].measure );
    real S2 = area * area;
    real E2 = (A2*B2*C2) / (16.0 * S2)
    -(A2+B2+C2)/30.0-(S2/5.0)*(1/A2+1/B2+1/C2);

    E2max = max(E2max, E2);
}

// Fespace
fespace femp1(disk, P1);
femp1 u, v;

problem laplace(u, v)
    = int2d(disk)(dx(u)*dx(v) + dy(u)*dy(v))
    - int2d(disk)(f*v)
    + on(1, u=0);
// Solve
laplace;

real prediction = sqrt(pi)*(E2max + 2*(sin(pi/(2*m)))^2);

// Internal Error^2
func err = u(x, y) - (1 - x^2 - y^2)/4;
real en=int2d(disk)( err^2 );

cout << "approx by a regular " << m << "-gon" << endl;
cout << "internal error energy= " << en << endl;
cout << "predicted error bound<= " << prediction << endl;
\end{lstlisting}

\bibliographystyle{IEEEtran}
\bibliography{refs}

\end{document}